\newtheorem{theorem}{Theorem}
\newenvironment{proof}[1][Proof]{\textbf{#1.} }{\ \rule{0.5em}{0.5em}}
\long\def\symbolfootnote[#1]#2{\begingroup%
	\def\thefootnote{$\;$}\footnote[#1]{$^*$#2}\endgroup}
\begin{document}
	
	\title{Equivalence of the existence of K-partitions with the existence of the precipitous ideal}
	\author{Ryszard Frankiewicz and Joanna Jureczko\footnote{The author is partially supported by Wroc\l{}aw Univercity of Science and Technology grant of K34W04D03 no. 8201003902.}}
\maketitle

\symbolfootnote[2]{Mathematics Subject Classification: Primary 03C25, 03E35, 03E55, 54E52.

	\hspace{0.2cm}
	Keywords: \textsl{non-complete Baire metric space, Kuratowski partition, $K$-partition, precipitous ideal, K-ideal.}}

\begin{abstract}
	In this note we give equivalence of the existence of K-partitions with the existence of the precipitous ideal which is essentially topological. This way we strengthen the main result of Frankiewczi and Kunen (1987). 
	\end{abstract}

\section{Introduction}

The starting point of our considerations is the following result.
\\\\
\textbf{1.1 (\cite{FK}).} $ZFC+$"there is a measurable cardinal" is equiconsistent with $ZFC+$" there is a Baire metric space $X$, a metric space $Y$ and a function $f \colon X \to Y$ having the Baire property such that there is no set $F \subseteq X$ for which $f|X\setminus F$ is continuous".
\newpage
\noindent
In the presence of the next well known result 
\\\\
\textbf{1.2 (\cite{TJ}).} 
\begin{itemize}
\item [(1)] If $\kappa$ is a regular uncountable cardinal that
carries a precipitous ideal, then $\kappa$ is measurable in an inner
model of ZFC.
\item [(2)] If $\kappa$ is measurable cardinal, then there exists a generic extension
in which $\kappa = \omega_1$, and $\kappa$ carries a precipitous ideal.
\end{itemize}
it seems to be essential to ask whether  the existence of the precipitous ideal is equivalent with the existence of so called Kuratowski partiotions  which is equivalent to the second part of  1.1 by 1.3.    The answer is positive, which is the main result of this paper.
\\\\
\textbf{1.3 (\cite{FJW}).}  	Let $X, Y$ be tHausdorff spaces and $A \subset X$. Then the following statements are equivalent
\begin{itemize}
\item [(1)] the set $A$ does not admit Kuratowski partition.
\item [(2)] for any mapping $f \colon A \to Y$ having the Baire property there exists a meager set $M \subset A$ such that $f\upharpoonright(A\setminus M)$ is continuous.
\end{itemize}

This paper is divided into three sections. In Section 2 we give definitions and previous results used in Section 3. For definitions and  facts not cited here we refer to e.g. \cite{RE, KK1} (topology) and \cite{TJ} (set theory). Section 3 contains main results, where the equivalence is given in Theorem 2. In Section 4 we present the historical background of the problem concerning Kuratowski partitions, which we will call $K$-partitions in honour of  Kazimierz Kuratowski.

\section{Definitions and previous results}

Let $X$ be a Hausdorff space. (In the whole paper we consider only Hausdorff spaces).  A set $U \subseteq X$ has \textit{the Baire property} iff there exist an open set $V \subset X$ and a meager set $M \subset X$ such that $U = V \triangle M$, where $\triangle$ means the symmetric difference of sets.

A partition $\mathcal{F}$ of a Baire space $X$ into meager subsets of $X$ is called \textit{Kuratowski partition}, (shortly $K$-partition) iff $\bigcup \mathcal{F}'$ has the Baire property for all $\mathcal{F}' \subseteq \mathcal{F}$, (it means $\mathcal{F}$ is completely additive with respect to the Baire property).
If there exists a  $K$-partition of $X$ we always denote by $\mathcal{F}$ with the smallest cardinality $\kappa$.   Moreover, we enumerate $$\mathcal{F} = \{F_\alpha \colon \alpha < \kappa\}.$$
Obviously,  $\kappa$ is regular. If $\kappa$ was singular, then $cf(\kappa)$ would be the minimal one. By Baire Theorem $\kappa$ is uncountable.

For a given set $U \subseteq X$  the family 
$$\mathcal{F}\cap U = \{F \cap U \colon F \in \mathcal{F}\}$$
is $K$-partition of $U$ as a subspace of $X$.

With any $K$-partition 
$\mathcal{F} = \{F_\alpha \colon \alpha < \kappa\}$, indexed by a cardinal $\kappa$,  one may associate an ideal 
$$I_\mathcal{F} = \{A \subset \kappa \colon \bigcup_{\alpha \in A} F_\alpha \textrm{ is meager}\}$$
which is called \textit{$K$-ideal}, (see \cite{JJ}).
\\
Note, that $I_\mathcal{F}$ is a non-principal ideal. Moreover, $[\kappa]^{< \kappa} \subseteq I_{\mathcal{F}}$ because $$\kappa = \min\{|\mathcal{F}| \colon \mathcal{F} \textrm{ is $K$-partition of }X\}.$$

Let $FN(\kappa) = \{f \in {^{X}}\kappa \colon \exists_{\mathcal{U}_f} \textrm{ family of open disjoint sets, } \bigcup \mathcal{U}_f \textrm{ is dense in } \\X \textrm{ and } \forall_{F_\alpha \in \mathcal{F}} \forall_{U \in \mathcal{U}_f}\  f \textrm{ is constant on } F_\alpha \cap U\}$.
\\
\\
\textbf{2.1 (\cite{FK}).}  
If $f, g \in FN(\kappa)$, then 
\begin{itemize}
\item [(1)] $\{x \colon f(x) < g(x)\}$ has the Baire property,
\item [(2)] $\{x \colon f(x) = g(x)\}$ has the Baire property.
\end{itemize}

\noindent
Let $\tau$ denotes the family of non-empty open subsets of $X$. Consider the following set
$$X(\tau) = \{x \in (\tau)^\omega \colon  \bigcap_{n \in \omega} x(n) \not = \emptyset\}$$
which is treated as a subspace of a complete metric space $(\tau)^\omega$, where the space $\tau$ is equipped with the discrete topology. 
\newpage\noindent
\textbf{2.2 (\cite{AFP}).} 
\begin{itemize}
	\item [(1)] $X(\tau)$ is a Baire metric space.
	\item [(2)] If $(X, \tau)$ admits $K$-partition $\{F_\alpha \colon \alpha < \kappa \}$ for some cardinal $\kappa$, then $X(\tau)$ also admits $K$-partition and it is in the form $\{\tilde{F}_\alpha \colon \alpha < \kappa\}$, where $$\tilde{F}_\alpha = \{x \in X(\tau) \colon \alpha = \min \{\beta < \kappa \colon \bigcap_{n \in \omega}\{ x(n) \cap F_\beta \not = \emptyset\}\}.$$
\end{itemize}
\noindent
Let $I$ be an ideal on $\kappa$ and et $I^+ = P(\kappa)\setminus I $, (where $P(\kappa)$ means the power set of $\kappa$). Consider a set
$$X(I) = \{x \in (I^+)^\omega \colon \bigcap_{n \in \omega}x(n) \not = \emptyset, \forall_{m<n} \bigcap_{n \in \omega} x(n) \in I^+\}.$$
The set $X(I)$ is considered as a subset of a complete metric space $(I^+)^\omega$, wher the set $I^+$ is equipped with the discrete topology.
\\\\
\textbf{2.3 (\cite{FK}).}
Let $X$ be a space and $I$ be an ideal on a cardinal $\kappa$.
\begin{itemize}
	\item [(1)] $X(I)$ is a Baire space iff $I$ is precipitous.
	\item [(2)] If $I$ is a precipitous ideal, then $X(I)$ admits  $K$-partition $\{F_\alpha \colon \alpha < \kappa\}$, where $F_\alpha = \{x \in X(I) \colon \alpha = \bigcap_{n \in \omega} x(n)\}$.
\end{itemize}

\section{Main result}

The main result of this paper is Theorem 2 in which we strenghten the results given in 1.1. The idea of Theorem 2 is based of Theorem 1 and  2.2 and 2.3.  Although Theorem 1 is proved for Baire metric spaces, the metrisability can be omitted because of 2.2. It is worth adding that Theorem 1 can be reformulated and proved in game theoretic notion, which is shown in \cite{JJ1}.

\begin{theorem}
	Let $X$ be a Baire metric space with $K$-partition $\mathcal{F}$ of cardinality $\kappa$, where $\kappa = min\{|\mathcal{K}|\colon \mathcal{K} \textrm{ is $K$-partition of } X\}$.   Then there exists an open set $U \subset X$ such that the $K$-ideal $I_{\mathcal{F}\cap U}$ on $\kappa$ associated with $\mathcal{F}\cap U$ is precipitous.
\end{theorem}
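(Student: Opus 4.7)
The natural angle is to read 2.3(1) contrapositively: to exhibit an open $U\subseteq X$ for which $I_{\mathcal{F}\cap U}$ is precipitous, it is enough to exhibit $U$ for which $X(I_{\mathcal{F}\cap U})$ is Baire. My plan is therefore to transfer the Baire property from $X$ itself to $X(I_{\mathcal{F}\cap U})$ through a canonical continuous map built from $\mathcal{F}$, and then to read off the desired $U$ as a basic cylinder in the sequence representation.

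First I would apply 2.2 to replace $X$ with $X(\tau)$, so that basic open sets are explicit cylinders indexed by finite sequences in $\tau$ and the $K$-partition is in the canonical form $\{\tilde F_\alpha\}$. Then, for each non-empty open $V\subseteq X$, I would set $A(V)=\{\alpha<\kappa : F_\alpha\cap V\neq\emptyset\}$ and verify that $A(V)\in I_\mathcal{F}^+$: if $A(V)\in I_\mathcal{F}$ then $V\subseteq \bigcup_{\alpha\in A(V)}F_\alpha$ would be meager, contradicting that $V$ is a non-empty open subset of the Baire space $X$. This yields a natural correspondence between open sets in $X$ and $I_\mathcal{F}^+$-sets in $\kappa$.

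Next I would define $\phi\colon X\to X(I_{\mathcal{F}})$ by $\phi(x)(n)=A\bigl(\bigcap_{k\le n}x(k)\bigr)$ for $x=(x(n))_{n<\omega}\in X(\tau)$. The sequence $(\phi(x)(n))_n$ is $\subseteq$-decreasing and $I^+$-valued, and if $p\in\bigcap_n x(n)$ lies in the unique $F_\alpha$ through $p$, then $\alpha\in\bigcap_n\phi(x)(n)$, so $\phi$ does land in $X(I_\mathcal{F})$; continuity is automatic since $\phi(x)(n)$ depends only on the first $n{+}1$ coordinates of $x$. Finally I would take $U$ to be a basic cylinder on which $\phi$ behaves well, and argue by contradiction: if $X(I_{\mathcal{F}\cap U})$ were not Baire, the pull-back along $\phi$ of a meager cover of an open set there would produce a meager cover of an open subset of $U\subseteq X$, contradicting the Baire property of $X$.

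The hard step, as I see it, is exactly this pull-back: verifying that $\phi^{-1}$ sends nowhere-dense subsets of $X(I_{\mathcal{F}\cap U})$ to nowhere-dense subsets of $U$. This will require both the minimality of $\kappa$ (so that $[\kappa]^{<\kappa}\subseteq I_\mathcal{F}$) and the complete additivity of $\mathcal{F}$ with respect to the Baire property, together with metric completeness of $X(\tau)$, so that a chain of shrinking cylinders converges to a genuine point whose $F_\alpha$-class realises a branch of the would-be failing precipitousness tree. Locating an open $U$ on which these obstructions vanish — essentially, a cylinder below which no local failure of precipitousness witnessable inside $X$ occurs — is where the $K$-partition structure and the metric geometry of $X(\tau)$ must be used in tandem, and is where I expect the real work of the proof to lie.
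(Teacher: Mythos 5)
There is a genuine gap, and it sits exactly where you locate ``the real work'': the claim that $\phi^{-1}$ carries nowhere dense (or meager) subsets of $X(I_{\mathcal{F}\cap U})$ back to nowhere dense subsets of $U$. Continuity of $\phi$ gives nothing in this direction --- preimages of nowhere dense sets under a continuous map need not be nowhere dense unless the map is open, or at least category-preserving, onto its image. Your $\phi$ is very far from that: its image consists only of sequences whose terms have the special form $A(V)=\{\alpha : F_\alpha\cap V\neq\emptyset\}$ for $V$ a finite intersection of cylinder coordinates, which is a thin subfamily of $I^+$; a dense open subset of $X(I_{\mathcal{F}\cap U})$ can easily avoid $\phi[X(\tau)]$ entirely, in which case the pull-back of its (nowhere dense) complement is all of the domain. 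So the sentence ``the pull-back along $\phi$ of a meager cover \ldots produces a meager cover of an open subset of $U$'' is precisely the content of the theorem, deferred rather than proved. The choice of $U$ (``a basic cylinder on which $\phi$ behaves well'') is likewise not specified by any checkable condition.

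For comparison, the paper argues in the opposite direction and never needs a category-preserving map between the two sequence spaces. It assumes that $I_{\mathcal{F}\cap U}$ fails to be precipitous for \emph{every} open $U$, fixes a maximal disjoint family $\mathcal{U}$ of such $U$'s, and invokes Jech's functional characterization of non-precipitousness (Lemma 22.19) to get, on each $U$, a descending sequence of functionals with representing functions $h^U_k$. These are transplanted into $X$ as functions $f_k\in FN(\kappa)$ constant on each piece $F_\beta\cap U$; by 2.1 the sets $V_k=\{x : f_k(x)>f_{k+1}(x)\}$ have the Baire property and are shown to be comeager, so the Baire Category Theorem applied in $X$ itself yields a point $x_0$ with $f_0(x_0)>f_1(x_0)>\cdots$, an infinite descending sequence of ordinals. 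If you want to salvage your route via 2.3(1), you would need to replace $\phi$ by something like an open continuous map \emph{from} (an open subset of) $X$ \emph{onto} $X(I_{\mathcal{F}\cap U})$, or directly run a Banach--Mazur/descending-sequence argument inside $X(I_{\mathcal{F}\cap U})$ using the complete additivity of $\mathcal{F}$ --- which is essentially what the functional argument of the paper does in disguise.
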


\begin{proof}
	Let $\mathcal{F} = \{F_\alpha \colon \alpha < \kappa\}$.
	We will show that there exists an open set $U \subset X$ such that 
	$$I_{\mathcal{F}\cap U} = \{A \subset \kappa \colon \bigcup_{\alpha \in A} F_\alpha\cap U \textrm{ is meager}\}$$ is precipitous.
	
	Suppose that  for any  open $U \subset X$ the ideal $I_{\mathcal{F}\cap U}$ is not precipitous. 
	\\
	Fix a family $\mathcal{U}$ of open and disjoint subsets of $X$ such that $\bigcup \mathcal{U}$ is dense in $X$ and fix $U \in \mathcal{U}$.
	Then by \cite[Lemma 22.19, p. 424-425]{TJ} there exists a sequence of functionals 
	$\Phi^U_{0} > \Phi^U_{1}> ... $ on some set $S^{U} \in  I^{+}_{\mathcal{F}\cap U}$.
	Let $W^U_k= W_{\Phi^U_k}$ be an $I_{\mathcal{F}\cap U}$-partition.
	Let $X^U_k \subset S^U$ be such that $X^U_k \in W^U_k$ for any $k \in \omega$.
	Since $I_{\mathcal{F}\cap U}$ is not precipitous, $\bigcap_{k\in \omega} X^U_{k} = \emptyset$ for any $X^U_{k} \in W^U_{k}$, $ k \in \omega.$
	
	Each  $X^U_{k}$ is the domain of some $I_{\mathcal{F}\cap U}$-function $h^U_{k} \in \Phi^U_{k}$ and if $X^U_k \supseteq X^U_{k+1}$, then
	$h^{U}_{k} (\beta) >h^{U}_{k+1}(\beta)$ for all $\beta \in X^{U}_{k+1}$. 
	
	Now, for any $\beta \in X^U_k$ and any $h^U_k \in \Phi^U_{k}$ define a function $f^U_{k, \beta} \in {^X}\kappa$ such that
	\\
	1) $dom(f^U_{k, \beta})  = F_\beta \cap U$,
	\\
	2) $f^U_{k, \beta}(x) = h^U_k(\beta)$ for any $x \in F_\beta \cap U$.
	\\
	Then, by properties of functions $h^U_k, k \in \omega$  we have that 
	$$f^U_{k, \beta}(x) > f^U_{k+1, \beta}(x) \textrm{ for any } x \in F_\beta \cap U.$$
	\\
	Now, for any $k \in \omega$ consider a function
	$$f_k = \bigcup_{U \in \mathcal{U}} \bigcup_{\beta < \kappa} f^U_{k, \beta}.$$ Then $f_k\in FN(\kappa)$ for any $k \in \omega$.
	By 2.1, for any $k \in \omega$ the set
	$$V_k = \{x\colon f_k(x) > f_{k+1}(x)\}$$
	has the Baire property. Then for any $k \in \omega$ the set $X \setminus V_k$ has also the Baire property and moreover is meager in $X$.
	\\Indeed. Suppose that there is $k_0 \in \omega$ for which   $X \setminus V_{k_0} = M \triangle W$ for some  meager $M$ and open $W$ and such that  $(X\setminus V_{k_0}) \cap W$ is nonempty. 
	Let $x' \in(X\setminus V_{k_0}) \cap W$. Then $f_{k_0}(x') \leqslant f_{k_0+1}(x')$. But $f_k =  \bigcup_{U \in \mathcal{U}} \bigcup_{\beta < \kappa} f^U_{k, \beta}$ and by 2) in the definition of $f^U_{k, \beta}$ we have that $h^U_{k_0}(\beta) \leqslant h^U_{k_0 +1}(\beta)$. A contradiction to the properties of ${I_{\mathcal{F}\cap U}}$-functions $h^U_k$.
	Thus, $V_k$ is co-meager for any $k \in \omega$.
	
	By the Baire Category Theorem, (see e.g. \cite[p. 197-198, 277]{RE}), there exists $x_0 \in \bigcap_{k \in \omega}V_k$. Then $$f_0(x_0) > f_1(x_0) > f_2(x_0) > ...$$
	what is impossible since $f_k(x_0)$ are ordinals.
\end{proof}

\begin{theorem}
	The existence of $K$-partitions for Baire spaces is equivalent to the existence of precipitous ideals.
\end{theorem}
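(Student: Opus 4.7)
My plan is to prove Theorem 2 by establishing both implications using the tools already collected in Section 2, namely 2.2 and 2.3, together with Theorem 1 proved immediately above. The two directions are essentially packaged in these preparatory results, and the bulk of the work has been done in Theorem 1.

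For the forward direction, suppose $X$ is a Baire space carrying a $K$-partition. Since Theorem 1 is stated only for Baire metric spaces, I would first invoke 2.2 to pass from $X$ to the space $X(\tau)$: by 2.2(1) this is a Baire metric space, and by 2.2(2) it inherits a $K$-partition from the one on $X$. Next I would choose a $K$-partition $\mathcal{F}$ of $X(\tau)$ of minimal cardinality $\kappa$ and apply Theorem 1 to $X(\tau)$, obtaining an open set $U \subset X(\tau)$ such that the $K$-ideal $I_{\mathcal{F}\cap U}$ on $\kappa$ is precipitous. This delivers the required precipitous ideal.

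For the reverse direction, suppose $I$ is a precipitous ideal on a cardinal $\kappa$. I would directly apply 2.3 to the space $X(I)$: by 2.3(1), $X(I)$ is a Baire space since $I$ is precipitous, and by 2.3(2), $X(I)$ admits the explicit $K$-partition $\{F_\alpha : \alpha < \kappa\}$ with $F_\alpha = \{x \in X(I) : \alpha = \bigcap_{n \in \omega} x(n)\}$. Thus $X(I)$ is the required Baire space carrying a $K$-partition.

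The main obstacle I anticipate is ensuring that Theorem 1 applies cleanly after the reduction through 2.2. Theorem 1 requires $\kappa$ to be the \emph{minimal} cardinality of a $K$-partition of the ambient space; the partition of $X(\tau)$ inherited via 2.2(2) has the same cardinality as the original partition of $X$, but a priori $X(\tau)$ might admit a strictly smaller $K$-partition. Since Theorem 2 only asserts the existence of \emph{some} precipitous ideal, this is handled by replacing the inherited partition by one of minimal cardinality on $X(\tau)$ before applying Theorem 1. Beyond this small bookkeeping, the proof is simply an assembly of the already-established ingredients 2.2, 2.3, and Theorem 1.
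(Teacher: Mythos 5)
Your proof follows essentially the same route as the paper: 2.2 reduces the problem to the Baire metric space $X(\tau)$, Theorem 1 then yields the precipitous ideal on a suitable open (basic clopen) subset, and 2.3 supplies the converse via the space $X(I)$. Your remark about first replacing the inherited partition of $X(\tau)$ by one of minimal cardinality before invoking Theorem 1 addresses a point the paper's own proof passes over silently, so your version is if anything slightly more careful.
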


\begin{proof}
	Let $X$ be a Baire space with $K$-partition $\mathcal{F} = \{F_\alpha \colon \alpha < \kappa\}$ and let $\tau$ be the family of all non-mepty subsets of $X$. By 2.2, the space $X(\tau)$ is metric and admits $K$-partition  $\tilde{\mathcal{F}} = \{\tilde{F}_\alpha \colon \alpha < \kappa\}$ which is in the form given in 2.2. 
	
	Now, consider $K$-ideal $I_{\tilde{\mathcal{F}}}$ associated with $\tilde{\mathcal{F}}$. By Theorem 1 there exists some set $[s] = \{x \in X(\tau) \colon s \subseteq x\}$ such that $I_{\tilde{\mathcal{F}}\cap [s]}$ is precipitous.
	
	Consider, now, a set $X(I_{\tilde{\mathcal{F}}\cap [s]})$. By 2.3  $X(I_{\tilde{\mathcal{F}}\cap [s]})$ is a Baire space which admits $K$-partition of the form given in 2.3. 
\end{proof}

\section{Historical background}
The problem of the existence of $K$-partition has long history. For completeness we give here its historical background.

In 1935 K. Kuratowski in \cite{KK} posed the problem  whether a function $f \colon X \to~Y$, (where $X$ is completely metrizable and $Y$ is metrizable), such that each preimage of an open set of $Y$ has the Baire property, is continuous apart from a meager set.

This problem has been considered by several authors since the 70s' of the last century.
In \cite{EFK} there is shown that this problem is equivalent to the problem of the existence of partitions of completely metrizable spaces into meager sets with the property that the union of each subfamily of this partition has the Baire property. Such a partition is called  \textit{Kuratowski partition}, (shortly $K$-partitions).

In the 70s' of the last century R. H. Solovay and L. Bukovsk\'y proved the non-existence of  $K$-partitions of the unit interval $[0,1]$ for measure and category by forcing methods (the generic ultrapower), but Bukovsk\'y's proof (for a subset of the real line), (see \cite{LB}), is shorter and less complicated than Solovay's (unpublished results).

In 1987 in \cite{FK} there was proved the result 1.1 (from the beginning of this paper). 

In 2019 in \cite{JJ} there was introduced the notion of $K$-ideals associated with $K$-partitions and there were examined their properties.
Among others there was shown  that from a structure of such  $K$-ideal one cannot "decode" complete information about $K$-partition of a given space because, as shown in \cite{JJ}, the structure of such an ideal can be almost arbitrary: it can be the  Fr\'echet ideal, so by
\cite[Lemma 22.20, p. 425]{TJ} it is not precipitous, whenever $\kappa$ is regular.
Moreover, as demonstrated in \cite{JJ},  for measurable cardinal $\kappa$, a $\kappa$-complete ideal can be represented by some $K$-ideal.
However if $\kappa =|\mathcal{F}|$ is not measurable cardinal, where $\mathcal{F}$ is $K$-partition of a given space, then one can obtain an $|\mathcal{F}|$-complete ideal which can be the Fr\'echet ideal or a $\kappa$-complete ideal representating some $K$-ideal or can  be a proper ideal of such $K$-ideal and contains the Fr\'echet ideal. 
Thus, for obtaining $K$-partition from  $K$-ideal we need to have complete information about the space in which the ideal is considered.

In the presence of above considerations and the 1.1 the natural question is under which assumptions $K$-ideals can be precipitous, (see  \cite[Theorem 22.33, p.432]{TJ}). Such information led us to prove the required equivalent mentioned in Section 1. 
Our work in this topic (divided into two papers) enlarges result 1.1 which was proved in \cite{FK}  with using  forcing methods  (precisely a model of the G-generic ultrapower in Keisler sense, see \cite[sec. 6.4]{CK} and \cite{TJ} for details). 
In the second part of our work, see \cite{FJ} we show that, if $\kappa$ is the smallest real-valued measurable cardinal not greater than $ 2^{\aleph_0}$, then there exists a complete metric space of cardinality not greater than $ 2^{\kappa}$ admitting a Kuratowski partition. These two papers complete the considerations on this topic.

{\sc Ryszard Frankiewicz}
\\
Silesian Univercity of Technology, Gliwice, Poland.
\\
{\sl e-mail: ryszard.frankiewicz@polsl.pl}
\\

{\sc Joanna Jureczko}
\\
Wroc\l{}aw University of Science and Technology, Wroc\l{}aw, Poland
\\
{\sl e-mail: joanna.jureczko@pwr.edu.pl}

\end{document}